\documentclass[12pt]{amsart}
\usepackage{amssymb,amsmath,amsthm}
\usepackage{tikz}
\usepackage{cancel}
\usepackage{mathtools,float}
\usepackage[T1]{fontenc}

\usetikzlibrary{arrows,shapes,automata,backgrounds,decorations,petri,positioning,patterns}

\addtolength{\hoffset}{-2cm}
\addtolength{\textwidth}{4cm}
\addtolength{\voffset}{-1cm}
\addtolength{\textheight}{1.5cm}

\theoremstyle{plain}
\newtheorem{thm}{Theorem}[section]

\newtheorem{cor}[thm]{Corollary}

\theoremstyle{definition}
\newtheorem{defn}[thm]{Definition}

\title{On the expected number of commutations in reduced words}

\author[Bridget Eileen Tenner]{Bridget Eileen Tenner$^{\dagger}$}
\address{Department of Mathematical Sciences, DePaul University, Chicago, IL 60614}
\email{bridget@math.depaul.edu}

\thanks{$^{\dagger}$ Research partially supported by a Simons Foundation Collaboration Grant for Mathematicians.}

\subjclass[2010]{Primary: 05E15; Secondary: 05A15, 05A16}

\begin{document}

\begin{abstract}
We compute the expected number of commutations appearing in a reduced word for the longest element in the symmetric group.  The asymptotic behavior of this value is analyzed and shown to approach the length of the permutation, meaning that nearly all positions in the reduced word are expected to support commutations.  Finally, we calculate the asymptotic frequencies of commutations, consecutive noncommuting pairs, and long braid moves.\\

\noindent \emph{Keywords:} permutation, reduced word, commutation
\end{abstract}

\maketitle

\section{Introduction}

The symmetric group $\mathfrak{S}_n$ is generated by the simple reflections $\{s_1,\ldots, s_{n-1}\}$, where $s_i$ interchanges $i$ and $i+1$. These reflections satisfy the Coxeter relations
\begin{eqnarray}
\nonumber &s_i^2=1& \text{ for all $i$,}\\
\label{eqn:short} &s_is_j = s_js_i& \text{ for $|i - j| > 1$, and}\\
\label{eqn:long} &s_is_{i+1}s_i = s_{i+1}s_is_{i+1}.&
\end{eqnarray}

\begin{defn}
Relations as in equation~\eqref{eqn:short} are called \emph{commutations} (also, \emph{short braid relations}), and those as in equation~\eqref{eqn:long} are \emph{long braid relations} (also, \emph{Yang-baxter moves}).
\end{defn}

There is a well-defined notion of ``length'' for a permutation, and it can be defined in terms of these simple reflections $\{s_i\}$.

\begin{defn}
Consider $w \in \mathfrak{S}_n$ written as a product $w = s_{i_1}s_{i_2}\cdots s_{i_{\ell(w)}}$ for $1 \le i_j \le n-1$, where $\ell(w)$ is minimal.  This $\ell(w)$ is the \emph{length} of $w$, while the product $s_{i_1}s_{i_2}\cdots s_{i_l}$ is a \emph{reduced decomposition} for $w$ and the string of subscripts $i_1i_2\cdots i_l$ is a \emph{reduced word} for $w$.
\end{defn}

The Coxeter relations \eqref{eqn:short} and \eqref{eqn:long} have obvious analogues in reduced words.  Namely, the letters $\{i,j\}$ commute when $|i - j| > 1$, and $i(i+1)i$ can be rewritten as $(i+1)i(i+1)$.  We will abuse terminology slightly and refer to these phenomena in reduced words as commutations and long braid moves, respectively, as well.

The structure of reduced words, and the influence of commutations and long braid moves, is of great interest.  Some facts are known, such as those cited below, but many open questions remain. Additionally, there are objects defined naturally in terms of reduced words and Coxeter relations, including commutation classes and the graph of those classes (see, for example, \cite{elnitsky} and \cite{tenner rdpp}), for which much of the architecture is still largely obscure.

\begin{defn}
There is a \emph{longest element} $w_0 \in \mathfrak{S}_n$, and $\ell(w_0) = \binom{n}{2}$.  In one-line notation, this $w_0$ is the permutation $n(n-1)(n-2)\cdots 321$.
\end{defn}

In \cite{reiner}, Reiner showed the following surprising fact about long braid moves in reduced words for $w_0 \in \mathfrak{S}_n$.

\begin{thm}[{\cite[Theorem 1]{reiner}}]\label{thm:reiner}
For all $n \ge 3$, the expected number of long braid moves in a reduced word for $w_0 \in \mathfrak{S}_n$ is $1$.
\end{thm}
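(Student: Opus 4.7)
The plan is to apply linearity of expectation and reduce the claim to a combinatorial identity. Writing $B(w)$ for the number of long braid moves in a reduced word $w$ of $w_0$, and $R(w_0)$ for the total number of such reduced words, the expected value is
\[
E \;=\; \frac{1}{R(w_0)} \sum_{w} B(w) \;=\; \frac{1}{R(w_0)} \sum_{j=1}^{\binom{n}{2}-2} N_j,
\]
where $N_j$ counts reduced words $i_1 i_2 \cdots i_{\binom{n}{2}}$ for $w_0$ whose triple $(i_j, i_{j+1}, i_{j+2})$ has the form $k(k+1)k$ or $(k+1)k(k+1)$. The theorem becomes the identity $\sum_{j} N_j = R(w_0)$, and I would aim to prove it by exhibiting an explicit bijection $\Phi$ from the set of pairs $(w,j)$ with a long braid move at position $j$ to the set of reduced words for $w_0$.

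The natural framework for building $\Phi$ is the Edelman--Greene correspondence, which identifies reduced words for $w_0$ with standard Young tableaux of staircase shape $\delta_{n-1} = (n-1, n-2, \ldots, 1)$. A braid triple at positions $j, j+1, j+2$ in the word translates into a restricted local configuration of the entries $j, j+1, j+2$ in the corresponding tableau, since the ascent/descent pattern at positions $j$ and $j+1$ is forced and the additional equality $i_j = i_{j+2}$ further constrains their relative cells. I would first classify exactly which configurations of $\{j, j+1, j+2\}$ in a staircase SYT arise from long braid triples, then define $\Phi$ on the tableau side by relocating the marked triomino to a canonical position via a localized promotion or evacuation, after which the mark is forgotten.

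Before committing to this strategy I would verify the statement by hand for $n = 3$ and $n = 4$, both as a sanity check and to harvest data about how marked triples are distributed among reduced words. The main obstacle is the construction of $\Phi$ itself: while each $N_j$ is individually amenable via hook-length computations on the staircase shape, a purely algebraic proof that $\sum_j N_j = R(w_0)$ looks delicate because it demands a non-cancelling identity among position-dependent hook products. A direct bijection --- for instance, defined by a Little-style bump or a localized cyclic rotation of $w$ at the braid position --- would deliver both a clean proof and a conceptual explanation for why the expectation is exactly $1$, independently of $n$.
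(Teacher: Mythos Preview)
The paper does not prove this theorem; it is quoted from Reiner~\cite{reiner} as background, so there is no ``paper's own proof'' to compare against.  What you have submitted is also not a proof: it is an outline of a program, and you explicitly flag that the central step---constructing the bijection $\Phi$ on marked staircase tableaux---is an unsolved ``main obstacle.''  As written, nothing has been established beyond the (correct) reformulation $\sum_j N_j = R(w_0)$.

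It is worth noting that Reiner's actual argument, and the argument this paper adapts for commutations in Theorem~\ref{thm:main}, follows a quite different and much shorter route than the one you sketch.  The identity $s_i w_0 s_{n-i} = w_0$ gives a cyclic symmetry on reduced words for $w_0$, so the count $N_j$ is independent of $j$ and it suffices to compute $N_1$.  A reduced word beginning with a braid triple $k(k{+}1)k$ or $(k{+}1)k(k{+}1)$ corresponds to a reduced word for $s_k s_{k+1} s_k w_0$, which is vexillary; Stanley's theorem (Theorem~\ref{thm:stanley} here) then expresses the count as $f^{\lambda}$ for a staircase with one corner removed, and a hook-length computation finishes the identity.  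No Edelman--Greene analysis or tableau bijection is needed.  Your bijective program, if it could be completed, would give something conceptually stronger than Reiner's counting proof, but as it stands the proposal contains a genuine gap exactly where the work would have to be done.
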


Similar calculations were made for the finite Coxeter group of type $B$ in \cite{tenner expb}, one of which is restated here.

\begin{thm}[{\cite[Theorem 3.1]{tenner expb}}]\label{thm:expb}
For all $n \ge 3$, the expected number of long braid moves in a reduced word for the longest element in the Coxeter group of type $B_n$ is $2 - 4/n$.
\end{thm}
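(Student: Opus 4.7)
\emph{Plan.} My approach would mirror Reiner's strategy for Theorem~\ref{thm:reiner}. By linearity of expectation, the expected number of long braid moves in a uniformly random reduced word $\mathbf{a} = a_1 \cdots a_{n^2}$ for the longest element $w_0 \in B_n$ is
\[
E_n \;=\; \frac{1}{R_n}\sum_{p=1}^{n^2-2} N_p,
\]
where $R_n$ is the total number of reduced words for $w_0$, and $N_p$ counts those reduced words with a long braid triple $j(j+1)j$ or $(j+1)j(j+1)$ (with $j \ge 1$) at positions $p, p+1, p+2$. The whole argument reduces to evaluating $\sum_p N_p$ and dividing by $R_n$.

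\emph{Key steps.} First, I would invoke the classical bijection between reduced words for $w_0 \in B_n$ and standard shifted Young tableaux of the shifted staircase shape $(2n-1,2n-3,\ldots,3,1)$, which gives a closed form for $R_n$ via the shifted hook length formula. Next, I would translate the event ``positions $p,p+1,p+2$ carry a long braid pattern'' into a local condition on the three cells of the shifted tableau labeled $p,p+1,p+2$; in the type $A$ analysis, these three cells are forced into a specific L-shaped configuration, and I expect an analogous local characterization here. Then $N_p$ becomes a sum of shifted hook length formulas for tableaux with a prescribed local pattern; summing over $p$, collapsing via hook products for the full shape, and simplifying should yield $(2 - 4/n) R_n$.

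\emph{Main obstacle.} The principal difficulty, and the reason the answer differs from the type $A$ value of $1$, is the presence in type $B$ of the order-four braid relation $s_0 s_1 s_0 s_1 = s_1 s_0 s_1 s_0$. Three-letter patterns of the form $0 \cdot 1 \cdot 0$ or $1 \cdot 0 \cdot 1$ do \emph{not} constitute long braid moves in the sense of equation~\eqref{eqn:long}, yet they appear in reduced words and interact with the shifted-tableau bijection in a way that skews the naive tableau statistic for counting long braids. Carefully isolating the contributions from these boundary patterns near $s_0$, and verifying that their subtraction produces exactly the correction $-4/n$ relative to the type-$A$-analogous computation, will be the technical heart of the proof.
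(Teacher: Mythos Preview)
The paper does not prove this theorem. Theorem~\ref{thm:expb} is quoted from \cite{tenner expb} purely as background, alongside Reiner's Theorem~\ref{thm:reiner}, to motivate the paper's own main result (Theorem~\ref{thm:main}) on commutations in type~$A$. There is no argument in the present paper to compare your proposal against; for the actual proof you would need to consult \cite{tenner expb} directly.

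Regarding the proposal on its own terms: it is a plan rather than a proof. You correctly anticipate that linearity of expectation, a tableau model for reduced words of $w_0 \in B_n$, and hook-length computations are the natural ingredients, and you rightly flag the order-four relation $m(s_0,s_1)=4$ as the source of the deviation from the type-$A$ value. But none of the steps are carried out. The bijection you invoke between reduced words for $w_0\in B_n$ and standard shifted tableaux of shape $(2n-1,2n-3,\ldots,1)$ is asserted without reference or verification; the ``local L-shape'' characterization of braid triples in the shifted setting is conjectured by analogy rather than established; and the decisive cancellation that is supposed to yield exactly $-4/n$ is left entirely undone. As written this is an outline of where a proof might go, not a proof.
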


Recent work on random reduced words for the longest element appears under the guise of ``random sorting networks,'' as in \cite{ahrv, ah, agh, warrington}.

In the present article, we compute the expected number of commutations in a reduced word for the longest element $w_0 \in \mathfrak{S}_n$.  We obtain an explicit sum for this expectation, which is stated in Theorem~\ref{thm:main}.  Unfortunately,  occurrences of commutations are not as tidy as occurrences of long braid moves, and so our result is not independent of $n$ as in Reiner's work \cite{reiner}, nor of such straightforward form as the results of \cite{tenner expb}.

In the final section of the paper, we discuss the asymptotic behavior of the enumeration from Theorem~\ref{thm:main}.  Finally, we use that analysis to produce Corollary~\ref{cor:proportions}, giving the asymptotic frequencies of commutations, consecutive noncommuting pairs, and long braid moves in reduced words for the longest element.

\section{Enumerating commutations}

Throughout this section, fix a positive integer $n$ and consider $w_0 \in \mathfrak{S}_n$.  Let $\ell = \ell(w_0) = \binom{n}{2}$.

\begin{defn}
For any integer $k \in [1,\ell-1]$, say that a reduced word $i_1 \cdots i_{\ell}$ for $w_0$ \emph{supports a commutation in position $k$} if
$$|i_k - i_{k+1}| > 1.$$
\end{defn}

We want to compute the expected number of commutations in a reduced word for the longest element $w_0 \in \mathfrak{S}_n$.

In particular, we are interested in the following statistic.

\begin{defn}
Let $C_n$ be the random variable on a reduced word for $w_0 \in \mathfrak{S}_n$, which counts positions in the word that support commutations.
\end{defn}

To analyze $C_n$, we will actually look at the complementary event; that is, consecutive noncommuting pairs of symbols in a reduced word.

\begin{defn}
Consider a reduced word $i_1i_2\cdots i_{\ell}$ for $w_0$.  Let $A_n^{(k,j)}$ be the indicator random variable for the event that $i_k = j$ and $i_{k+1} = j+1$, and let
$$A_n = \sum_{k=1}^{\ell-1}\sum_{j=1}^{n-2}A_n^{(k,j)}.$$
Similarly, let $B_n^{(k,j)}$ be the indicator random variable for the event that $i_k = j+1$ and $i_{k+1} = j$, and let
$$B_n = \sum_{k=1}^{\ell-1}\sum_{j=1}^{n-2}B_n^{(k,j)}.$$
\end{defn}

Therefore $E(A_n)+E(B_n)$ computes the expected number of consecutive noncommuting pairs in a reduced word for $w_0$, and so
\begin{equation}\label{eqn:complement}
E(C_n) = \ell-1 - \big(E(A_n) + E(B_n)\big).
\end{equation}
In fact, the symmetry $s_{n-1-i}w_0s_i = w_0$ gives a correspondence between $A_n^{(k,j)}$ and $B_n^{(k,n-1-j)}$, and so we can simplify equation~\eqref{eqn:complement} to
\begin{equation}\label{eqn:complement2}
E(C_n) = \ell-1 - 2E(A_n).
\end{equation}

The proof of Theorem~\ref{thm:main} will employ similar techniques to those used by Reiner in \cite{reiner}.  In particular, we will recognize that particular permutations are vexillary, and then use a result of Stanley to enumerate reduced words.

\begin{defn}
A permutation is \emph{vexillary} if it avoids the pattern $2143$.  That is, $w(1)\cdots w(n)$ is vexillary if there are no indices $1 \le i_1 < i_2 < i_3 < i_4 \le n$ such that $w(i_2) < w(i_1) < w(i_4) < w(i_3)$.
\end{defn}

Stanley showed in \cite{stanley} that the reduced words for a vexillary permutation are enumerated by standard Young tableaux of a particular shape, which can be enumerated using the hook-length formula, as discussed in \cite{stanley ec2}.

\begin{defn}
Fix a permutation $w = w(1)\cdots w(n)$.  For all integers $i \in [1,n]$, let
$$r_i = \left|\{j : j < i \text{ and } w(j) > w(i)\}\right|.$$
The partition $\lambda(w) = (\lambda_1(w),\lambda_2(w),\ldots)$ is defined by writing $\{r_1,\ldots,r_n\}$ in nonincreasing order. 
\end{defn}

\begin{thm}[{\cite[Corollary 4.2]{stanley}}]\label{thm:stanley}
If $w$ is vexillary, then the number of reduced words for $w$ is equal to $f^{\lambda(w)}$, the number of standard Young tableaux of shape $\lambda(w)$.
\end{thm}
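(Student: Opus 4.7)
The plan is to follow Stanley's original route through symmetric functions. The first step is to introduce, for each permutation $w$ of length $\ell$, the Stanley symmetric function
\[
F_w(x_1,x_2,\ldots) \;=\; \sum_{i_1\cdots i_\ell\in R(w)}\ \sum_{\substack{1\le a_1\le\cdots\le a_\ell\\ a_j<a_{j+1}\ \text{if}\ i_j<i_{j+1}}} x_{a_1}x_{a_2}\cdots x_{a_\ell},
\]
where $R(w)$ denotes the set of reduced words for $w$. Then I would establish three basic facts: that $F_w$ is symmetric; that the coefficient of the squarefree monomial $x_1 x_2\cdots x_\ell$ in $F_w$ equals $|R(w)|$ (since the only way to fill with $\ell$ distinct subscripts is $a_j=j$, which is compatible with every word); and that $F_w$ expands nonnegatively in the Schur basis, so $F_w=\sum_\lambda \alpha_\lambda^w s_\lambda$ with $\alpha_\lambda^w\in\mathbb{Z}_{\ge 0}$.

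The crux is then to show that when $w$ is vexillary, only the term $\lambda=\lambda(w)$ appears, with coefficient $1$, i.e.\ $F_w=s_{\lambda(w)}$. I would argue this by passing through Schubert polynomials: first prove that for vexillary $w$, the Schubert polynomial indexed by $w$ equals a flagged Schur polynomial of shape $\lambda(w)$. This is carried out by induction on $\ell(w)$ using the divided-difference recursion, choosing at each step a descent position at which the reduction preserves vexillarity and shrinks $\lambda(w)$ by a single corner. Passing to the stable limit recovers $F_w=s_{\lambda(w)}$. Combined with the standard fact $[x_1x_2\cdots x_\ell]\,s_{\lambda(w)} = f^{\lambda(w)}$ (which is just the specialization of the Schur function at a squarefree monomial, counted by standard Young tableaux of shape $\lambda(w)$), this finishes the proof.

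The main obstacle is the flagged-Schur identification: the induction requires careful bookkeeping to verify that the chosen reduction leaves a vexillary permutation whose shape is exactly $\lambda(w)$ minus a corner, and this ultimately rests on a delicate analysis of how $2143$-avoidance interacts with divided differences. A cleaner, purely combinatorial alternative is Edelman--Greene insertion, which realizes the identity
\[
|R(w)| \;=\; \sum_{P\in\mathrm{EG}(w)} f^{\mathrm{sh}(P)}
\]
bijectively, reducing everything to the statement that for vexillary $w$ there is a unique EG tableau and its shape is $\lambda(w)$. Either route encounters the same essential difficulty, namely showing that $2143$-avoidance is precisely the condition forcing a single Schur function to appear in $F_w$; once that is established, the coefficient extraction is immediate.
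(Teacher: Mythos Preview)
The paper does not prove this theorem at all: it is quoted verbatim as \cite[Corollary~4.2]{stanley} and used as a black box in the proof of Theorem~\ref{thm:main}. There is therefore no proof in the paper to compare your proposal against.

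Your sketch is a faithful outline of the standard route (Stanley symmetric functions, the identification $F_w=s_{\lambda(w)}$ for vexillary $w$ via Schubert polynomials or Edelman--Greene insertion, and extraction of the squarefree coefficient). The steps you flag as delicate are indeed the substantive ones, and you identify them correctly; but none of this is needed for the present paper, which only invokes the result.
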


We are now able to compute the expected number of commutations in a reduced word for $w_0 \in \mathfrak{S}_n$.

\begin{thm}\label{thm:main}
For all $n \ge 3$,
$$E(C_n) =
\binom{n}{2} - 1 - \frac{1}{3\binom{n}{2} 2^{2n-7}} \sum_{j=1}^{n-2} \frac{(2j-1)!!}{(j-1)!}\cdot \frac{(2j+1)!!}{j!}\cdot \frac{(2n-2j-3)!!}{(n-j-2)!}\cdot \frac{(2n-2j-1)!!}{(n-j-1)!}.$$
\end{thm}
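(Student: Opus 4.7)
Plan: By \eqref{eqn:complement2} it suffices to evaluate $E(A_n)$. Writing $R(w)$ for the number of reduced words for a permutation $w$, linearity of expectation yields
$$E(A_n) = \frac{1}{R(w_0)} \sum_{j=1}^{n-2} M_j,$$
where $M_j$ counts the total number of occurrences of the consecutive substring $j(j+1)$ across all reduced words for $w_0$. Each pair (reduced word, occurrence-position) counted by $M_j$ corresponds bijectively to a reduced factorization $w_0 = u \cdot s_j s_{j+1} \cdot v$ (with $\ell(u) + \ell(v) + 2 = \binom{n}{2}$) together with independent choices of reduced words for $u$ and $v$, so
$$M_j = \sum_{u} R(u)\, R\!\big((u s_j s_{j+1})^{-1} w_0\big),$$
the sum ranging over permutations $u \in \mathfrak{S}_n$ satisfying $\ell(u s_j s_{j+1}) = \ell(u) + 2$.

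The central step is to collapse this weighted factorization count into a single tableau enumeration. Following the approach of \cite{reiner}, I would argue that for each $j$ the identification $M_j = f^{\mu_j}$ holds, where $\mu_j$ is a partition obtained from the staircase $\delta_{n-1} := (n-1, n-2, \ldots, 1)$ by a local modification encoding the removed pair $s_j s_{j+1}$ at depth $j$. The longest element $w_0$ is vexillary (as $n(n-1)\cdots 1$ trivially avoids $2143$) with $\lambda(w_0) = \delta_{n-1}$, so $R(w_0) = f^{\delta_{n-1}}$ by Theorem~\ref{thm:stanley}. Applied analogously, I would verify that in each admissible factorization the two factors $u$ and $v$ are vexillary with partition shapes that tile $\delta_{n-1}$ together with the two cells reserved for $s_j s_{j+1}$, and that summing $R(u)R(v)$ over all admissible $u$---which amounts to summing a multinomial shuffle across the two tableau pieces---reassembles into $f^{\mu_j}$.

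The final step is an explicit hook-length evaluation. All hooks of $\delta_{n-1}$ are odd, and the modification yielding $\mu_j$ perturbs only those hooks passing through the two modified cells at depth $j$. The ratio $f^{\mu_j}/f^{\delta_{n-1}}$ therefore splits into four disjoint products of consecutive odd integers, corresponding to rows above/below and columns left/right of the modification, which evaluate precisely to the four double factorials $(2j-1)!!$, $(2j+1)!!$, $(2n-2j-3)!!$, and $(2n-2j-1)!!$ in the theorem. The factorials $(j-1)!, j!, (n-j-2)!, (n-j-1)!$ in their denominators arise from the factorial companions in the hook-length formula, while the constants $3$ and $2^{2n-7}$ emerge from a small number of hooks shared across the four runs. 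Substituting into \eqref{eqn:complement2} then yields the stated formula.

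The crux of the proof is the middle step---verifying $M_j = f^{\mu_j}$ for an explicit shape $\mu_j$. The defining weighted sum ranges over a large, structured family of factorizations, and showing that it collapses into a single hook-length product requires both a careful vexillary analysis of the pairs $(u,v)$ and a non-obvious summation identity (effectively, a skew analog of Stanley's result for reduced words). Once $\mu_j$ is correctly identified, the subsequent hook-length bookkeeping is routine; but the specific coefficient $1/(3\binom{n}{2}2^{2n-7})$ in the theorem leaves no room for error in the shape identification.
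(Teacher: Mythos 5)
Your reduction to $E(A_n)$ and the factorization identity $M_j = \sum_u R(u)\,R\big((us_js_{j+1})^{-1}w_0\big)$ are both correct, but the step you yourself flag as the crux---collapsing that weighted sum over factorizations into a single tableau count $f^{\mu_j}$---is exactly what is missing, and it is not a routine verification. Indeed, the correct value turns out to be $M_j = \left(\binom{n}{2}-1\right) f^{\lambda(a_n^{(j)})}$ with $a_n^{(j)} = s_{j+1}s_jw_0$, and there is no reason for this product to equal the number of standard Young tableaux of any single partition shape, so the identity $M_j = f^{\mu_j}$ is unlikely to hold as stated. More importantly, no ``skew analog of Stanley's result'' is needed, and pursuing one over all reduced factorizations $w_0 = u\cdot s_js_{j+1}\cdot v$ would be a substantial detour with no clear endpoint.

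The idea you are missing is the cyclic symmetry of reduced words for $w_0$, which the paper takes from Reiner: since $s_iw_0s_{n-i} = w_0$, the word $i_1i_2\cdots i_{\ell}$ is reduced for $w_0$ if and only if $i_2\cdots i_{\ell}(n-i_1)$ is. Iterating this shift shows that the number of reduced words containing $j(j+1)$ in positions $(k,k+1)$ is independent of $k$, so it suffices to count reduced words \emph{beginning} with $j(j+1)$; that count is simply $R(s_{j+1}s_jw_0)$, whence $M_j = (\ell-1)\,R(s_{j+1}s_jw_0)$ with no sum over $u$ at all. One then only needs that the single permutation $a_n^{(j)}$ is vexillary, with shape the staircase minus the corner cells of rows $j$ and $j+1$, and the hook-length ratio $f^{\lambda(a_n^{(j)})}/f^{\delta_n}$ is computed essentially as you describe in your final step (only the hooks through the affected cells change). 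So your first and last steps align with the paper, but without the cyclic-shift argument the middle of your proof does not go through.
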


\begin{proof}
Fix an integer $n \ge 3$ and let $\ell = \binom{n}{2}$.  To calculate the expected number of positions that support a commutation in a reduced word for $w_0 \in \mathfrak{S}_n$, we will analyze the complementary event and use equation~\eqref{eqn:complement2}.

As noted in \cite{reiner}, the fact that $s_iw_0s_{n-i} = w_0$ for all $i \in [1,n-1]$ gives a cyclic symmetry to reduced words for $w_0$:
$$i_1i_2\cdots i_{\ell}$$
is a reduced word for $w_0$ if and only if
$$i_2i_3 \cdots i_{\ell}(n-i_1)$$
is as well.  Therefore it is enough to consider the appearance of a commutation (or not) in the first position of a reduced word, using the identity
$$A_n^{(k,j)} = A_n^{(1,j)}$$
for all $1 \le k \le \ell-1$.  Thus equation~\eqref{eqn:complement2} can be rewritten as
$$E(C_n) = \ell - 1 - 2(\ell - 1)\sum_{j=1}^{n-2} E(A_n^{(1,j)}).$$

The expected value $E(A_n^{(1,j)})$ is equal to the probability that the reduced word for $w_0$ is
$$j(j+1)i_3i_4\cdots i_{\ell},$$
in which case $i_3i_4\cdots i_{\ell}$ is a reduced word for the permutation
\begin{eqnarray*}
a_n^{(j)} &=& s_{j+1}s_jw_0\\
&=& n(n-1)(n-2)\cdots(j+3)(j+1)j(j+2)(j-1)\cdots321,
\end{eqnarray*}
in one-line notation.

The permutation $a_n^{(j)}$ is vexillary, meaning that we can enumerate its reduced words by counting standard Young tableaux as described in Theorem~\ref{thm:stanley}.  The shape $\lambda(a_n^{(j)})$ is obtained from the staircase shape $\delta_n$ by deleting the corner squares from rows $j$ and $j+1$ as in Figure~\ref{fig:shapes}.  Therefore,
\begin{equation}\label{eqn:expectation in terms of shapes}
E(C_n) = \ell - 1 - 2(\ell - 1)\sum_{j=1}^{n-2} \frac{f^{\lambda(a_n^{(j)})}}{f^{\delta_n}}.
\end{equation}

\begin{figure}[htbp]
\begin{tikzpicture}[scale=.5]
\fill[black!20] (0,5) rectangle (5,6);
\fill[black!20] (0,6) rectangle (6,7);
\fill[black!20] (4,7) rectangle (6,9);
\foreach \x in {1,2,3,4,5,6,7,8} {\draw (0,\x) --+ (\x,0); \draw(\x,9) --+ (0,\x-9);}
\draw (0,1) -- (0,9) -- (8,9);
\draw (5,1) node {$\delta_9$};
\draw (6,1) node {\phantom{$\lambda(a_9^{(3)})$}};
\end{tikzpicture}
\hspace{.5in}
\begin{tikzpicture}[scale=.5]
\fill[black!20] (0,5) rectangle (4,6);
\fill[black!20] (0,6) rectangle (5,7);
\fill[black!20] (4,7) rectangle (6,9);
\foreach \x in {1,2,3,4,5,6,7,8} {\draw (0,\x) --+ (\x,0); \draw(\x,9) --+ (0,\x-9);}
\draw (0,1) -- (0,9) -- (8,9);
\draw (5,1) node {$\lambda(a_9^{(3)})$};
\fill[white] (4.05,4.8) rectangle (5.2,5.95);
\fill[white] (5.05,5.8) rectangle (6.2,6.95);
\foreach \x in {(4.5,5.5),(5.5,6.5)} {\fill[black] \x circle (4pt);}
\end{tikzpicture}
\caption{The staircase shape $\delta_9$ corresponding to $w_0 \in \mathfrak{S}_9$ and the shape $\lambda(a_9^{(3)})$. The shaded cells are the only ones in which the hook-lengths for the two shapes will differ.}
\label{fig:shapes}
\end{figure}
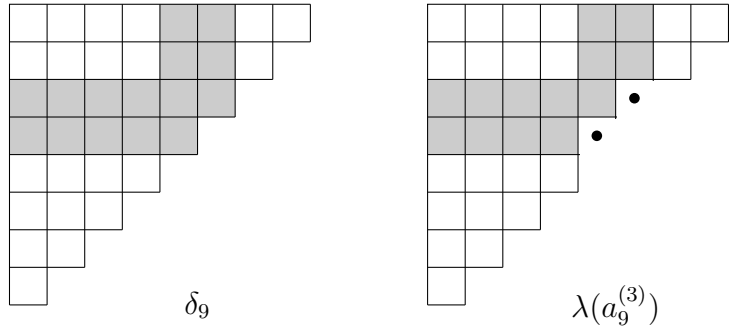

We evaluate equation~\eqref{eqn:expectation in terms of shapes} using the hook-length formula.  In fact, many hook-lengths of $\delta_n$ and $\lambda(a_n^{(j)})$ are the same, and thus cancel in the ratio
$$\frac{f^{\lambda(a_n^{(j)})}}{f^{\delta_n}}.$$
The only non-identical hook-lengths in the shapes $\delta_n$ and $\lambda(a_n^{(j)})$ occur as indicated in Figure~\ref{fig:shapes}, and so equation~\eqref{eqn:expectation in terms of shapes} becomes
$$E(C_n) = 
\ell - 1 - 2(\ell - 1)\sum_{j=1}^{n-2} \frac{(\ell-2)!}{\ell!}\cdot3 \cdot \left(\frac{2}{3}\right)^2 u_{j-1}  u_j  u_{n-j-2}  u_{n-j-1},$$
where
$$u_i = \frac{3\cdot5\cdots(2i+1)}{2\cdot4\cdots(2i)}.$$
Thus
\begin{eqnarray*}
E(C_n) &=& \ell - 1 - \frac{8}{3\ell}\sum_{j=1}^{n-2} u_{j-1}u_ju_{n-j-2}u_{n-j-1}\\
&=&\ell - 1 - \frac{1}{3\ell 2^{2n-7}} \sum_{j=1}^{n-2} \frac{(2j-1)!!}{(j-1)!}\cdot \frac{(2j+1)!!}{j!}\cdot \frac{(2n-2j-3)!!}{(n-j-2)!}\cdot \frac{(2n-2j-1)!!}{(n-j-1)!},
\end{eqnarray*}
completing the proof.
\end{proof}

\section{Asymptotic behavior}

In this last section of the paper, we analyze the expected value $E(C_n)$ as $n$ goes to infinity.  In fact, we will draw conclusions about appearances of all three possibilities of consecutive symbols in a reduced word for the longest element: commutations, consecutive noncommuting pairs $i(i+1)$ or $(i+1)i$, and long braid moves.

Set
$$\sigma_n^{(j)} = \frac{1}{3\binom{n}{2} 2^{2n-7}} \cdot \frac{(2j-1)!!}{(j-1)!}\cdot \frac{(2j+1)!!}{j!}\cdot \frac{(2n-2j-3)!!}{(n-j-2)!}\cdot \frac{(2n-2j-1)!!}{(n-j-1)!},$$
and recall from Theorem~\ref{thm:main} that
$$\sum_{j=1}^{n-2} \sigma_n^{(j)} = 2E(A_n).$$
In particular, this is the expected number of consecutive noncommuting pairs in a reduced word for $w_0 \in \mathfrak{S}_n$.

Central binomial coefficients satisfy
$$\binom{2x}{x} \le \frac{4^x}{\sqrt{\pi x}}.$$
Combining this with the fact that
$$\frac{(2x-1)!!}{(x-1)!} = \binom{2x}{x}\cdot \frac{x}{2^x}$$
yields
\begin{eqnarray*}
\sigma_n^{(j)} &=& \frac{256}{3\pi^2} \cdot \frac{\sqrt{j(j+1)(n-j-1)(n-j)}}{n(n-1)}\\
&\approx& \frac{256}{\ 3\pi^2} \cdot\frac{j(n-j)}{n^2}.
\end{eqnarray*}
Similarly, 
$$\binom{2x}{x} \ge \frac{4^x}{\sqrt{\pi x}}\cdot \left(1 - \frac{1}{8x}\right)$$
yielding the lower bound
\begin{eqnarray*}
\sigma_n^{(j)} &\ge& \frac{256}{3\pi^2} \cdot \frac{\sqrt{j(j+1)(n-j-1)(n-j)}}{n(n-1)}\\
& & \hspace{.25in} \cdot \left(1 - \frac{1}{8j}\right) \cdot \left(1 - \frac{1}{8(j+1)}\right) \cdot \left(1 - \frac{1}{8(n-j-1)}\right) \cdot \left(1 - \frac{1}{8(n-j)}\right)\\
&\approx& \frac{256}{\ 3\pi^2} \cdot\frac{j(n-j)}{n^2} \left(1 - \frac{n}{8j(n-j)}\right)^2.
\end{eqnarray*}
Therefore
$$\sigma_n^{(j)} = \frac{256}{\ 3\pi^2} \cdot\frac{j(n-j)}{n^2} + O\left(\frac{1}{n}\right).$$

From this we can compute the asymptotic behavior of the expected number of consecutive pairs of noncommuting symbols in a reduced word for $w_0 \in \mathfrak{S}_n$:
\begin{eqnarray}
\nonumber 2E(A_n) &=& \sum_{j=1}^{n-2}\sigma_n^{(j)}\\
\nonumber &=& \frac{256}{3\pi^2}\sum_{j=1}^{n-2}\frac{j(n-j)}{n^2} \ + \ O(1)\\
\nonumber &=& \frac{256}{\ 3\pi^2} \left(\frac{(n-1)(n-2)}{2n} - \frac{(n-1)(n-2)(2n-3)}{6n^2}\right) + O(1)\\
\nonumber &=& \frac{128}{\ 9\pi^2}\cdot \frac{(n-1)(n-2)(n+3)}{n^2} + O(1)\\
\label{eqn:asymp noncomm}&=& \frac{128}{\ 9\pi^2}n + O(1).
\end{eqnarray}

Combining equation~\eqref{eqn:asymp noncomm} with Theorem~\ref{thm:main} yields
\begin{eqnarray}
\nonumber E(C_n) &=& \binom{n}{2} - 1 - 2E(A_n)\\
\label{eqn:asymp comm} &=& \binom{n}{2} - \frac{128}{\ 9\pi^2}n + O(1).
\end{eqnarray}

It is interesting to compare equation~\eqref{eqn:asymp comm} with Theorem~\ref{thm:reiner}.  These computations imply that appearances of commutations and appearances of long braid moves have vastly different behaviors in reduced words for the longest element.  Namely, while only one long braid move is expected to appear, nearly all positions are expected to support commutations.  This is, perhaps, not surprising because commutations are positions in a reduced word where a symbol $i$ is followed by anything other than $\{i-1,i,i+1\}$, and the proportion $(n-4)/(n-1)$ of ``acceptable'' successors to $i$ approaches $1$ as $n$ gets large.

To conclude this article, we use equations~\eqref{eqn:asymp noncomm} and~\eqref{eqn:asymp comm} and Theorem~\ref{thm:reiner} to state the asymptotic expectation of the proportion of appearances of Coxeter-related behaviors in reduced words for $w_0 \in \mathfrak{S}_n$.

\begin{cor}\label{cor:proportions}
Consider the reduced words for the longest element $w_0 \in \mathfrak{S}_n$.
\begin{itemize}
\vspace{.1in}
\item $\displaystyle{\frac{E(\text{number of commutations})}{\text{length}} \ \approx \ 1}$
\vspace{.1in}
\item $\displaystyle{\frac{E(\text{number of consecutive noncommuting pairs})}{\text{length}} \ \approx \ \frac{256}{9\pi^2n}}$
\vspace{.1in}
\item $\displaystyle{\frac{E(\text{number of long braid moves})}{\text{length}} \ \approx \ \frac{2}{n^2}}$
\end{itemize}
\end{cor}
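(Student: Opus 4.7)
The plan is to combine the three key ingredients that have already been assembled in the paper: equation~\eqref{eqn:asymp comm} for the expected number of commutations, equation~\eqref{eqn:asymp noncomm} for the expected number of consecutive noncommuting pairs, and Theorem~\ref{thm:reiner} for the expected number of long braid moves. In each case, the ``length'' in the denominator is $\ell(w_0) = \binom{n}{2} = \tfrac{n(n-1)}{2}$, so all three statements reduce to dividing a known asymptotic expression by $\binom{n}{2}$ and reading off the leading behavior.

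First, for the commutations ratio, I would divide equation~\eqref{eqn:asymp comm} by $\binom{n}{2}$, writing
$$\frac{E(C_n)}{\binom{n}{2}} = 1 - \frac{128}{9\pi^2}\cdot\frac{n}{\binom{n}{2}} + O\!\left(\frac{1}{n^2}\right) = 1 - O\!\left(\frac{1}{n}\right),$$
which gives the first bullet. Second, for the consecutive noncommuting pairs, equation~\eqref{eqn:asymp noncomm} divided by $\binom{n}{2}$ yields
$$\frac{2E(A_n)}{\binom{n}{2}} = \frac{\tfrac{128}{9\pi^2}n + O(1)}{\tfrac{n(n-1)}{2}} = \frac{256}{9\pi^2 n} + O\!\left(\frac{1}{n^2}\right),$$
which is the second bullet. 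Third, by Theorem~\ref{thm:reiner} the expected number of long braid moves is identically $1$ for all $n \ge 3$, so
$$\frac{1}{\binom{n}{2}} = \frac{2}{n(n-1)} \approx \frac{2}{n^2},$$
giving the third bullet.

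There is no genuine obstacle here: the corollary is essentially a bookkeeping restatement of results already in hand. The only point that warrants care is making sure the ``$\approx$'' in the statement is interpreted consistently across the three items (asymptotic equivalence of leading terms as $n \to \infty$), which the three displayed expansions above make transparent. If desired, one can strengthen the conclusion by keeping the explicit error terms $O(1/n)$, $O(1/n^2)$, $O(1/n^2)$ respectively, but no additional asymptotic analysis beyond equations~\eqref{eqn:asymp noncomm} and~\eqref{eqn:asymp comm} is required.
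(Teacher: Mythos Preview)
Your proposal is correct and matches the paper's approach: the paper presents the corollary as an immediate consequence of equations~\eqref{eqn:asymp noncomm} and~\eqref{eqn:asymp comm} together with Theorem~\ref{thm:reiner}, without writing out a separate proof. Your explicit division by $\binom{n}{2}$ and extraction of leading terms is exactly the intended justification.
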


\section*{Acknowledgements}

I am indebted to Richard Stanley for helpful comments and enthusiasm about these results, and to Andrew Rechnitzer for asymptotic insight and Sage advice. I also appreciate the suggestions of the anonymous referees.

\end{document}